\newtheorem{thm}{Theorem}
\newtheorem{lem}{Lemma}
\newtheorem{corolary} {Corollary}
\theoremstyle{definition}
\newtheorem{remark}{Remark}
\newtheorem{defn}{Definition}
\newtheorem*{acknowledgments}{Acknowledgments}
\def\H{\mathcal H}
\begin{document}

\begin{titlepage}
\title{Spectra of Random Operators with absolutely continuous Integrated Density of States
\footnotetext{%
Mathematics Subject Classification(2010):
47A10, 
47B80
81Q10,
35P05.
}
\footnotetext{%
Keywords:
Density of states, Random Operators, Spectrum.
}}

\author{
\textbf{Rafael del Rio}
\\
\small Departamento de Fisica Matematica\\[-1.6mm]
\small Instituto de Investigaciones en Matem\'aticas Aplicadas y en Sistemas\\[-1.6mm]
\small Universidad Nacional Aut\'onoma de M\'exico\\[-1.6mm]
\small C.P. 04510, M\'exico D.F.\\[-1.6mm]
\small\texttt{delrio@iimas.unam.mx} or \small\texttt{delriomagia@gmail.com}
\\[4mm]
}
\date{}
\maketitle
\vspace{-4mm}
{\it Dedicated to Prof. Fritz Gesztesy on the occasion of his 60th Birthday}
\begin{center}
\begin{minipage}{5in}
  \centerline{{\bf Abstract}}
\bigskip
The structure of the spectrum of random operators is studied. It is shown that
 if the density of states measure of some subsets of the spectrum  is zero, then   
 these subsets are  empty.
In particular follows that  absolute continuity of the IDS
implies  singular spectra of ergodic operators  is either empty or of positive measure.
Our results apply to   Anderson and  alloy type models, perturbed  Landau Hamiltonians,
 almost periodic potentials and  models which are not ergodic.

\end{minipage}
\end{center}

\thispagestyle{empty}
\end{titlepage}
\textwidth=125 mm 
\textheight=195 mm.

\section{Introduction}
 Here we  study some aspects about  the structure of the almost sure spectrum of random operators.
 This was inspired by  
 Barbieri et al. \cite{BMS} where it was shown for some ergodic operators
of Anderson type, that  their almost sure singular continuous spectrum $\Sigma _{sc}$  
 satisfies  either $|\Sigma _{sc}\cap J|>0 $  or $\Sigma _{sc}\cap J=\emptyset$,
where $|\cdot|$ denotes the Lebesgue measure and  $J\subset \mathbb R$ is any interval.
 This is not true for every ergodic operator. The Fibonacci model for example, see \cite {Su}, has
 nonempty  almost sure singular continuous spectrum with zero Lebesgue measure. So, for which 
 models  and for which kinds of spectra  the result of Barbieri et al. holds?.In this work we show that an answer 
 can be given through the  integrated density of states, IDS for short, of the corresponding operators.
  
   In \cite{BMS} nothing is mentioned about IDS and the method used  rely on Howland's theory on relative finite perturbations.
   If the IDS were absolutely continuous the result in \cite{BMS} will follow immediately from Corollary \ref{cor}
   below. The so called Wegner estimates  imply regularity of the IDS and often its absolute continuity. There has been a lot of effort spent on proving  these  estimates, particularly because they provide  a key step  in some methods for  proving localization \cite{Stoll}. 
I have not found in the literature  a proof of the absolute continuity of the IDS with the exact conditions
   on the model given in \cite{BMS}, but there are  results on absolute continuity of the IDS for closely related models.
   In \cite{CH94} Corollary 4.6, the authors prove absolute continuity  for the IDS with conditions very similar to the ones in \cite{BMS}.
    In this case the nonexistence of almost sure spectra of zero measure, in particular singular continuous spectrum, can be proved with techniques    
that depend on the behavior of the IDS as we show in what follows.


     Knowledge of the IDS can give us then information about the measure of almost sure spectra. 
       Our main theorem \ref {main} says that if the density of states measure of a particular  spectrum in an interval is zero, then   
 this spectrum is empty inside that interval. This holds for $\mathbb P$-positive spectra (see definition \ref {as} below),
in particular for almost sure spectra, which could be singular continuous, pure point etc..
    Using the absolute continuity of the IDS that follows from Wegner estimates proved by several  authors for different models,
 we  shall then  see for a variety of situations, including  Anderson and  alloy type models, perturbed  Landau Hamiltonians,
 almost periodic potentials and even models which are not ergodic, that any almost sure spectra  has positive measure, if it is not empty. 
 In section \ref{pre} we present basic  definitions about random operators which are required and mention some important theorems about the 
existence of almost sure spectra for ergodic operators. Here the integrated density of states is introduced.   In section \ref {mai}, we present
the main results. These will allows us to use the absolute continuity of the IDS to prove the mentioned statement about the measure of the almost 
sure spectra. We finish by giving some explicit examples where our results can be applied.

\section {Preliminaries}\label {pre}
Let $(\Omega ,\mathcal M, \mathbb P)$ be a complete probability space .
By $\mathbb Z$, $\mathbb R$ and $\mathbb C$ we shall denote the sets of integer, real and complex 
numbers respectively. The scalar product in a Hilbert space will be denoted by $\langle,\rangle$.

 We  need  following definitions. See \cite {K0}.

\begin{defn}
 A family of bounded  operators $\{H_\omega \}_{\omega \in \Omega} $ defined on Hilbert space $\H $ is {\it  weakly measurable}, if 
$ \Omega \ni \omega \longrightarrow \langle x, H_\omega y \rangle \in \mathbb C $ is measurable for every $x,y \in \H$. 
 A family of selfadjoint  operators $\{H_\omega \}_{\omega \in \Omega} $ is {\it  measurable }, if 
$\omega \longrightarrow (H_\omega-z)^{-1} $ is weakly measurable for all $z\in \mathbb C\backslash  \mathbb R$.
\end{defn}
 \begin{defn}
 A family of measurable transformations 
$T_i:\Omega \rightarrow \Omega$ , $i\in \mathbb Z^d$ is called  {\it measure preserving} if 
$\mathbb P(T_i^{-1}A)=\mathbb P(A)$ for every $A\in \mathcal M$ and  {\it ergodic} if  $T_i^{-1}A=A $  for all  $ i\in \mathbb Z^d $ implies
$\mathbb P(A)=0$ or $1$.
\end{defn}
\begin{defn}\label {ergop}
 Let $\{T_i\}_{i\in \mathbb Z^d}$ be measure preserving and ergodic.
A measurable family of selfadjoint operators $\{H_\omega \}_{\omega \in \Omega} $ on a separable Hilbert space $\mathcal H$
is called  $\mathbb Z^d$ {\it ergodic} if there exist a family $\{U_i\}_{i\in \mathbb Z^d}$ of unitary operators in $\H$ such that $H_{T_i\omega }=U_iH_\omega U^*_i$.
We call the family $\{H_\omega \}_{\omega \in \Omega} $  $\mathbb Z^d$ {\it stationary}, if we just require $\{T_i\}_{i\in \mathbb Z^d}$  to be  measure preserving,
(may be ergodic too).
\end{defn}

The following theorem was proven by L. Pastur \cite {P}. $\sigma (H_\omega)$ denotes the spectrum of $H_\omega$
\begin{thm} \label {Pastur}.  
Let $\{H_\omega \}_{\omega \in \Omega} $ be an ergodic family. 

 There exists $\Sigma \subset \mathbb R$  such that $\sigma (H_\omega)=\Sigma$ for $\mathbb P$
almost all  $\omega \in \Omega $, that is  for all $\omega \in \Omega _1 $ with $\mathbb P(\Omega _1)=1$.( $\Sigma $ is $\omega$ independent).
\end{thm}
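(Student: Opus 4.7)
The plan is to detect the spectrum through a countable family of events, apply the $\{0,1\}$-dichotomy from ergodicity to each, and then reassemble the almost sure spectrum from them. Fix a countable base $\mathcal{B}$ of the usual topology of $\mathbb{R}$, for instance the open intervals with rational endpoints. A point $\lambda$ lies in $\sigma(H_\omega)$ precisely when $E_{H_\omega}(U)\neq 0$ for every $U\in\mathcal{B}$ containing $\lambda$, so reducing to this countable family is the key observation that allows us to work modulo a single null set.

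For each $U\in\mathcal{B}$ I would introduce the event
\[ A_U=\{\omega\in\Omega:\ E_{H_\omega}(U)\neq 0\}. \]
To see $A_U\in\mathcal{M}$, one uses the weak measurability of $\omega\mapsto (H_\omega-z)^{-1}$ together with Stone's formula (or a monotone-class argument extending measurability from resolvents to arbitrary bounded Borel functions of $H_\omega$) to conclude that $\omega\mapsto\langle x, E_{H_\omega}(U)y\rangle$ is measurable for every $x,y\in\mathcal{H}$; taking $x,y$ in a countable dense subset, $A_U$ is a countable union of measurable sets. Next, the intertwining $H_{T_i\omega}=U_iH_\omega U_i^*$ and the functional calculus give $E_{H_{T_i\omega}}(U)=U_i E_{H_\omega}(U)U_i^*$, so $A_U$ is $T_i$-invariant for every $i\in\mathbb{Z}^d$. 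Ergodicity then forces $\mathbb{P}(A_U)\in\{0,1\}$.

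Now set
\[ \Sigma=\mathbb{R}\setminus\bigcup\bigl\{U\in\mathcal{B}:\mathbb{P}(A_U)=0\bigr\}, \qquad \Omega_1=\bigcap_{\mathbb{P}(A_U)=1}A_U\ \cap\!\!\bigcap_{\mathbb{P}(A_U)=0}\!(\Omega\setminus A_U). \]
As a countable intersection of full-measure sets, $\mathbb{P}(\Omega_1)=1$. For $\omega\in\Omega_1$ both inclusions are immediate: if $\lambda\in\Sigma$, every $U\in\mathcal{B}$ containing $\lambda$ satisfies $\mathbb{P}(A_U)=1$, hence $E_{H_\omega}(U)\neq 0$ and $\lambda\in\sigma(H_\omega)$; conversely if $\lambda\notin\Sigma$, some $U\in\mathcal{B}$ containing $\lambda$ has $\mathbb{P}(A_U)=0$, so $E_{H_\omega}(U)=0$ and $\lambda\notin\sigma(H_\omega)$.

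The main technical obstacle is the measurability step: one has to justify that spectral projections of a weakly measurable resolvent family are themselves weakly measurable. Once this is in place, the ergodic dichotomy and the topological bookkeeping via the countable base $\mathcal{B}$ are routine, and the independence of $\Sigma$ from $\omega$ is automatic from its construction.
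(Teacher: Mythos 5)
Your proof is correct: the reduction of membership in $\sigma(H_\omega)$ to the countable family of events $A_U$ indexed by rational intervals, the invariance $T_i^{-1}A_U=A_U$ via $E_{H_{T_i\omega}}(U)=U_iE_{H_\omega}(U)U_i^*$, and the zero--one law are exactly the standard argument, and you correctly flag the only delicate point (passing from weak measurability of the resolvents to measurability of $\omega\mapsto\langle x,E_{H_\omega}(U)y\rangle$, handled by Stone's formula or a monotone class argument). The paper itself gives no proof of this theorem, only the citation to Pastur, and your argument coincides with the one found in that reference and in Kirsch's lecture notes.
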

 Analogous results hold for other parts of the spectrum. See \cite {KirM}, \cite {Kunz}
 
 \begin{thm}
Let $\{H_\omega \}_{\omega \in \Omega} $ be an ergodic family. There exist $\omega$ independent sets $\Sigma _{ac},\Sigma _{sc},\Sigma _{pp}$ such that 

$\Sigma _{ac} =\sigma _{ac}(H_{\omega})$ 
$\Sigma _{sc} =\sigma _{sc}(H_{\omega})$ 
$\Sigma _{pp} =\sigma _{pp}(H_{\omega})$ 
 for $ \mathbb P$ almost all $\omega $ .
\end{thm}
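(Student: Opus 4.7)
The plan is to prove each of the three statements in parallel by applying the ergodic $0$-$1$ law to a countable family of invariant events, following the pattern of Theorem~\ref{Pastur}. The extra work is concentrated in the measurability of the spectral projections onto the absolutely continuous, singular continuous and pure point subspaces.

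First I would exploit the intertwining $H_{T_i\omega}=U_iH_\omega U_i^*$. Because unitary equivalence preserves the Lebesgue decomposition of every spectral measure, it preserves each component of the spectrum. Consequently, for $\bullet\in\{ac,sc,pp\}$ and every $i\in\mathbb{Z}^d$,
$$\sigma_\bullet(H_{T_i\omega})=\sigma_\bullet(U_iH_\omega U_i^*)=\sigma_\bullet(H_\omega).$$

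Next I would encode these sets through countably many events. Fix a dense sequence $\{\phi_n\}$ in $\mathcal H$ and, for every open interval $J\subset\mathbb R$ with rational endpoints, introduce
$$A_{J,\bullet}\;=\;\{\omega:\sigma_\bullet(H_\omega)\cap J\neq\emptyset\}\;=\;\bigcup_n\{\omega:\langle\phi_n,E_\bullet(J,H_\omega)\phi_n\rangle>0\},$$
where $E_\bullet(J,H_\omega)$ denotes the spectral projection of $H_\omega$ associated with $J$, restricted to the corresponding subspace. The maps $\omega\mapsto\langle\phi_n,E_\bullet(J,H_\omega)\phi_n\rangle$ are measurable: the full spectral measure is accessible from the weakly measurable resolvent through Stieltjes inversion, the ac density is a boundary value of $\pi^{-1}\mathrm{Im}\langle\phi_n,(H_\omega-\lambda-i\varepsilon)^{-1}\phi_n\rangle$, atoms are isolated by $\lim_{\varepsilon\downarrow 0}\varepsilon\cdot\mathrm{Im}\langle\phi_n,(H_\omega-\lambda-i\varepsilon)^{-1}\phi_n\rangle$, and the sc part is the residual. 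Each $A_{J,\bullet}$ is therefore measurable, and by the first step it is $T_i$-invariant, so ergodicity yields $\mathbb P(A_{J,\bullet})\in\{0,1\}$.

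Finally I would reconstruct the three sets by
$$\Sigma_\bullet=\{\lambda\in\mathbb R:\mathbb P(A_{J,\bullet})=1\text{ for every rational open }J\ni\lambda\}.$$
Since only countably many rational intervals are involved, a single full-measure event remains on which every indicator $\mathbf 1_{A_{J,\bullet}}$ agrees with its almost-sure value. Because $\sigma_\bullet(H_\omega)$ is closed it is determined by the rational intervals it meets, so on this event $\sigma_\bullet(H_\omega)=\Sigma_\bullet$.

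The main obstacle is the measurability claim for $\bullet=sc$: unlike the ac and pp parts, $\sigma_{sc}$ is not recovered by a direct boundary-value formula, and its measurability must be obtained by subtracting the (already measurable) ac and pp spectral projections from the full spectral projection $E(J,H_\omega)$, together with a careful verification that the Lebesgue decomposition of $\mu_{\omega,n}^\phi=\langle\phi_n,E(\cdot,H_\omega)\phi_n\rangle$ depends measurably on $\omega$.
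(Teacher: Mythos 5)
The paper states this theorem without proof, referring the reader to \cite{KirM} and \cite{Kunz}; your argument --- invariance of each spectral component under the unitaries $U_i$, measurability of the ac/pp/sc spectral projections, the $0$--$1$ law applied to the countably many invariant events $A_{J,\bullet}$ indexed by rational intervals, and reconstruction of the closed sets $\Sigma_\bullet$ from the almost-sure values of these events --- is precisely the standard proof given in those references, and it is correct. The one point that still has to be written out in full is the one you yourself flag, namely the measurability of $\omega\mapsto\langle\phi_n,E_\bullet(J,H_\omega)\phi_n\rangle$ for the three spectral types, which is handled exactly along the route you indicate: the de la Vall\'ee Poussin decomposition of $\mu^{\phi_n}_\omega$ through the boundary behaviour of $\langle\phi_n,(H_\omega-\lambda-i\varepsilon)^{-1}\phi_n\rangle$, with limits taken along rational $\varepsilon\downarrow 0$ so that measurability in $\omega$ is preserved.
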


  The sets $\sigma _{ac}(H_{\omega}),\sigma _{sc}(H_{\omega}), \sigma _{pp}(H_{\omega}),$ denote the absolutely continuous, singular continuous and 
 pure point  spectra  of $H_{\omega}$ as defined in \cite {Teschl} p. 106, \cite{Last} or \cite {Kirsch} section 7.2 .The pure point spectrum $\sigma _{pp}(H_{\omega})$is the closure of the set of the eigenvalues of $H_{\omega}$.

   Finer decompositions of the spectra are possible. For ergodic operators in $l^2(\mathbb Z^d)$ the following result was proven in \cite{Last})
   (theorem 8.1),

 \begin{thm}\label{las}

For $\alpha \in [0,1]$ there exist subsets of $\mathbb R:\sigma _{\alpha ds},\sigma _{ed\alpha /\alpha s},\sigma _{\alpha ac},\sigma _{ed\alpha /s\alpha c}$ and $\sigma _{s\alpha dc}$ such that for $\mathbb P$  almost all $\omega$ they are respectively the $\alpha$-dimension singular , 
$\alpha$-singular of exact dimension $\alpha$ , absolutely continuous with respect to $h^\alpha$ , strongly $\alpha$-continuous  of exact dimension $\alpha$  and strongly $\alpha$-dimension continuous spectra of  $H_{\omega}$

\end{thm}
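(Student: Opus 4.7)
The plan is to adapt the standard Pastur--Kirsch--Martinelli argument that yields Theorem \ref{Pastur} and the following theorem for $\Sigma_{ac},\Sigma_{sc},\Sigma_{pp}$, replacing the coarse Lebesgue decomposition of the spectral measure by the refined decomposition with respect to the $\alpha$-dimensional Hausdorff measure $h^\alpha$ developed by Rogers--Taylor and later exploited by Last. The driving observation, common to all such results, is that for any unitary invariant $\mathcal S(H_\omega)$ of the operator, the covariance $H_{T_i\omega}=U_iH_\omega U_i^*$ gives $\mathcal S(H_{T_i\omega})=\mathcal S(H_\omega)$, so indicator events of the form $\{\omega:\mathcal S(H_\omega)\cap I\neq\emptyset\}$ are $T_i$-invariant and thus have probability $0$ or $1$ by ergodicity.

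First, I would fix a countable total family of vectors in $l^2(\mathbb Z^d)$ (for instance the canonical basis $\{\delta_n\}_{n\in\mathbb Z^d}$) and form a finite total spectral measure
$\mu_\omega=\sum_n 2^{-|n|}\mu_\omega^{\delta_n}$
whose topological support equals $\sigma(H_\omega)$. Each of the five spectra listed in the statement admits a characterization purely in terms of $\mu_\omega$: e.g.\ the $\alpha$-ac component is $\mathrm{supp}(\mu_\omega^{\alpha ac})$, where $\mu_\omega=\mu_\omega^{\alpha ac}+\mu_\omega^{\alpha s}$ is the Lebesgue decomposition relative to $h^\alpha$, while the exact-dimension and strong-$\alpha$-continuous components are read off from the upper and lower local $\alpha$-dimensions $D^\pm_{\mu_\omega}(x)=\limsup/\liminf_{\varepsilon\to 0}\log\mu_\omega(x-\varepsilon,x+\varepsilon)/\log\varepsilon$ via the Rogers--Taylor dichotomy.

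Next comes the measurability step: for every rational interval $I=(a,b)$ and every refinement symbol $\sharp\in\{\alpha ds,\,ed\alpha/\alpha s,\,\alpha ac,\,ed\alpha/s\alpha c,\,s\alpha dc\}$, the map $\omega\mapsto \mu_\omega^\sharp(I)$ must be Borel. Weak measurability of $H_\omega$ and functional calculus give measurability of $\omega\mapsto\mu_\omega^{\delta_n}(I)$, and the Rogers--Taylor local-dimension cutoffs are countable pointwise $\liminf/\limsup$ operations, which preserve measurability. Unitary covariance then yields $\mu_{T_i\omega}^\sharp(I)=\mu_\omega^\sharp(I)$ a.s., and ergodicity produces a deterministic constant $c_{I,\sharp}\in[0,\infty]$. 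Setting
$\sigma_\sharp:=\overline{\bigcup\{I\text{ rational interval}: c_{I,\sharp}>0\}}$
and intersecting the resulting countably many full-measure events furnishes the $\omega$-independent sets claimed in the theorem.

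The main obstacle I expect is not the ergodic argument but the measurability and separability of the refined decomposition: unlike the coarse ac/sc/pp split, the $\alpha$-dimensional components are defined through pointwise limiting quantities involving $\mu(x-\varepsilon,x+\varepsilon)/(2\varepsilon)^\alpha$, and one must check carefully that the Borel subsets of $\mathbb R$ so produced are (i) genuine unitary invariants of $H_\omega$, (ii) determined by countable data (so that a single null set suffices across all $\alpha$, $I$, and $\sharp$), and (iii) jointly measurable in $(\omega,x)$ so that supports can be extracted measurably. Once the Rogers--Taylor machinery is set up to deliver these three properties, the rest of the proof is essentially the Pastur--Kirsch--Martinelli template applied five times in parallel.
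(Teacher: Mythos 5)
A preliminary remark: the paper itself gives no proof of Theorem \ref{las}; it is quoted from Last \cite{Last} (Theorem 8.1) and used as a black box. So the comparison can only be against the argument in the cited source, and your outline does reconstruct that argument in spirit: a total spectral measure, the Rogers--Taylor refinement of the Lebesgue decomposition, measurability of the components, and the Pastur/Kirsch--Martinelli covariance-plus-ergodicity template.

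There is, however, one step that is wrong as written. You assert that unitary covariance yields $\mu_{T_i\omega}^{\sharp}(I)=\mu_\omega^{\sharp}(I)$ almost surely, and then let ergodicity produce a deterministic constant $c_{I,\sharp}$ equal to this common value. But with $U_i$ the translation by $i$ one has $\mu_{T_i\omega}^{\delta_n}=\mu_\omega^{U_i^{*}\delta_n}=\mu_\omega^{\delta_{n-i}}$, so $\mu_{T_i\omega}=\sum_n 2^{-|n|}\mu_\omega^{\delta_{n-i}}$; the weights are shuffled and this measure is \emph{not} equal to $\mu_\omega$, only mutually absolutely continuous with it. Consequently $\omega\mapsto\mu_\omega^{\sharp}(I)$ is not a $T_i$-invariant function and ergodicity cannot force it to be constant. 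The object that is invariant is the event $\{\omega:\mu_\omega^{\sharp}(I)>0\}$: mutual absolute continuity preserves each Rogers--Taylor component up to null sets (the upper and lower local $\alpha$-dimensions of equivalent measures coincide almost everywhere with respect to either, by the differentiation theorem for the Radon--Nikodym derivative), so positivity of $\mu_\omega^{\sharp}(I)$ depends only on the measure class and its indicator is $T_i$-invariant; ergodicity then makes it almost surely $0$ or $1$. Replacing your constant $c_{I,\sharp}$ by this zero--one datum, the definition of $\sigma_\sharp$ as the closure of the union of rational intervals with indicator $1$ goes through unchanged, and the remaining points of your outline (measurability of $\omega\mapsto\mu_\omega^{\delta_n}(I)$, countability of the data in $I$ and $\sharp$, a single exceptional null set) are the standard and correct completion.
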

For the definition of all this kinds of different spectra , see \cite{Last}.\\

   If $H$ is an operator  with domain $D(H ) \subset \mathcal H$ in Hilbert space $\mathcal H$   and $M\subset \mathcal H$ is a subspace of
$\mathcal H$,    the {\it  restriction of  $H$ to $M$} denoted by $H| _{M} $  is the operator  with domain $D(H| _{M})= M  \cap D(H )$  and such that for $f\in D(H| _{M})$ one has  $H| _{M } f= Hf$. 

Let $P_M$ be the orthogonal projection on the closed subsapce $M \subset \mathcal H$ of the Hilbert space $\mathcal H$.
$M$ is said to {\it reduce} the symmetric operator $H$ or to be a {\it reducing subspace for $H$} if $u\in D(H)$ implies $P_Mu \in D(H)$
and $HP_Mu\in M$. See for example \cite {Ka} p. 278.

\begin{defn}\label{as}
   Assume  $M_\omega \subseteq  \H$ reduces the operator $H_\omega $.
A set $\Sigma \subset \mathbb R$ is called an {\it almost sure spectrum } for $\{H_\omega \}_{\omega \in \Omega} $ if there exist a 
set $\Omega _\Sigma \subset \Omega$ with  $\mathbb P( \Omega _\Sigma )=1$ such that 
$\Sigma =\sigma (H_\omega| _{M _\omega})$ for all $\omega \in  \Omega _\Sigma $, that is 
for $\mathbb P$ almost all  $\omega $. ($\Sigma$ is $\omega $ independent). If  $\mathbb P( \Omega _\Sigma )>0$ then we call $\Sigma $
a $\mathbb P$-{\it positive spectrum} for  $\{H_\omega \}_{\omega \in \Omega} $.

\end{defn}

\begin{remark}\label{as} From  Definition \ref{as} it follows that for $\mathbb P$ almost all $\omega \in \Omega $
$$ {\sigma (H_\omega), \sigma _{sc}(H_{\omega}), \sigma _{pp}(H_{\omega}),\sigma _{ac}(H_{\omega}), \sigma _{\alpha ds}(H_\omega), \atop\sigma _{ed\alpha /\alpha s}(H_\omega),\sigma _{\alpha ac}(H_\omega),
\sigma _{ed\alpha /s\alpha c}(H_\omega),\sigma _{s\alpha }(H_\omega)}$$
 
 are  almost sure spectra for $\{H_\omega \}_{\omega \in \Omega} $ if this family is ergodic.

 There can be almost sure spectra for operators which are not ergodic. See  for example \cite {LJ} Corollary 1.1.3.
\end{remark}




Now let us consider an $\mathbb Z^d$ stationary  family of selfadjoint operators $H_\omega $ acting on $L_2(\mathbb R^d)$ or $l_2(\mathbb Z^d)$.
 Let $\Lambda = [-1/2,1/2]^d\cap \mathbb R^d$ and denote by  $\chi _\Lambda  $  the characteristic function of
the set $\Lambda $, that is $\chi _\Lambda (x) =1$ if $x\in \Lambda $ and $\chi _\Lambda (x) =0$ if $x \not\in  \Lambda $.  

In case $H_\omega$ acts in $l_2(\mathbb Z^d)$ we talk of {\it the discrete model} and define 
   \begin{equation}
\nu (A):= \mathbb E( \langle\delta _0,E_{H_\omega}(A)\delta _0\rangle)
\end{equation}
 and in case $H_\omega $ acts in $L_2(\mathbb R^d)$ we talk of the {\it continuous model} and  define 
  \begin{equation} \label {dencont}
 \nu (A):= \mathbb E( tr \chi _\Lambda  E_{H_\omega}(A)\chi _\Lambda)   
\end{equation}
 for any Borel set $A$. 
$\mathbb E $ denotes the {\it mathematical expectation}, that is $\mathbb E (\dots)= \int_\Omega \dots d\mathbb P$ .  The symbol $\delta _0$
denotes the function such that   $\delta _0(0)=1$ and $\delta _0 (n) = 0$ for any $n\in \mathbb Z^d , n\not=0$. In general we define $\delta _i$ as $\delta _i(n)= 1$ if $i=n$ and $\delta _i (n)=0$ otherwise.
$E_{H_\omega}(A)$ is the {\it spectral projection measure} associated to  the selfadoint operator $H_\omega$, see \cite {Ka} p.355, that is $E_{H_\omega}(A)= \chi _A(H_\omega)$  where $\chi _A$ is 
the characteristic function of the Borel set $A \subset \mathbb R$. By $tr $ we denote the trace , which is uniquely defined (may be +$\infty$) for any bounded positive operator $B$ as $tr B= \sum_{n=1}^\infty \langle \varphi _n, B \varphi _n\rangle$ where $\{\varphi _n\}_{n=1}^\infty$ is an orthonormal basis. In equation (\ref{dencont}) $\chi _\Lambda$ is understood
 as a multiplication operator. 

The measure $\nu (A)$ defined above  is called {\it the density of states measure}.
 The distribution function $N$ of $\nu $   defined by 
 \begin{equation}
N(E)=\nu ((-\infty, E])
\end{equation}
is known as {\it the integrated density of states} IDS. See \cite {Kirsch}.
We shall use the short hand notation IDS  for the density of states measure or for the integrated density of states.
For more information on this object the interested reader can see \cite{VesM} and \cite{KM}.

Observe that the function $N$ is absolutely continuous if and only if the measure $\nu$ is absolutely continuous.

\section {Main results}\label{mai}

Let 
$\{H_\omega \}_{\omega \in \Omega} $ be a $\mathbb Z^d$ stationary family of selfadjoint operators acting on $L_2(\mathbb R^d)$ or $l_2(\mathbb Z^d)$
where the corresponding  unitary operators  are given by $(U_i\varphi)(x)  = \varphi (x-i) , i\in \mathbb Z^d$  for $\varphi \in 
L_2(\mathbb R^d)$ or $l_2(\mathbb Z^d)$. Let $I\subset \mathbb R$ be a closed interval which may be unbounded and denote its interior by $I^\circ$. 
  Our main theorem is 
 \begin{thm}\label{main}

If $\Sigma $ is a $\mathbb P$-positive spectrum for $\{H_\omega \}_{ \omega \in \Omega }$, then 
$\nu (\Sigma \cap I) =0$ implies $\Sigma\cap I^\circ  =\emptyset $, where $\nu$ is the density of states measure. 

\end{thm}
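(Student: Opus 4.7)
The strategy is to convert the numerical hypothesis $\nu(\Sigma\cap I)=0$ into the operator-level statement $E_{H_\omega}(\Sigma\cap I)=0$ on a set of full $\mathbb{P}$-measure, and then to invoke the elementary consequence of the spectral theorem that $E_A(I)=0$ on a closed interval $I$ forces $\sigma(A)\cap I^\circ=\emptyset$, applied to $A=H_{\omega_0}|_{M_{\omega_0}}$ for a single well-chosen $\omega_0\in\Omega_\Sigma$.

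The first step is to show that for any Borel $B$ with $\nu(B)=0$, one has $E_{H_\omega}(B)=0$ almost surely. In the discrete model, $\mathbb{Z}^d$-stationarity $H_{T_i\omega}=U_iH_\omega U_i^*$ together with $U_i\delta_0=\delta_i$ and measure preservation of $T_i$ yields $\mathbb{E}(\langle\delta_j,E_{H_\omega}(B)\delta_j\rangle)=\nu(B)=0$ for every $j\in\mathbb{Z}^d$. Intersecting the countably many null sets produces a set $\Omega'$ with $\mathbb{P}(\Omega')=1$ on which every diagonal matrix element $\langle\delta_j,E_{H_\omega}(B)\delta_j\rangle$ vanishes; since $E_{H_\omega}(B)\ge 0$ and $\{\delta_j\}_{j\in\mathbb{Z}^d}$ is an orthonormal basis of $l_2(\mathbb{Z}^d)$, this forces $E_{H_\omega}(B)=0$ on $\Omega'$. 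The continuous case runs identically: $\operatorname{tr}(\chi_\Lambda E_{H_\omega}(B)\chi_\Lambda)=0$ forces $E_{H_\omega}(B)^{1/2}\chi_\Lambda=0$; stationarity transfers this to $E_{H_\omega}(B)^{1/2}\chi_{\Lambda+i}=0$ for every $i\in\mathbb{Z}^d$ on a set of full measure, and $E_{H_\omega}(B)=0$ follows because $\{\Lambda+i\}_{i\in\mathbb{Z}^d}$ tiles $\mathbb{R}^d$.

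Applying this to $B=\Sigma\cap I$ (Borel, since $\Sigma$ is closed), I would then pick $\omega_0\in\Omega_\Sigma\cap\Omega'$, a nonempty intersection because $\mathbb{P}(\Omega_\Sigma)>0$. Since $M_{\omega_0}$ reduces $H_{\omega_0}$, the spectral projection of the restriction satisfies $E_{H_{\omega_0}|_{M_{\omega_0}}}(A)=P_{M_{\omega_0}}E_{H_{\omega_0}}(A)P_{M_{\omega_0}}$, and, because $\sigma(H_{\omega_0}|_{M_{\omega_0}})=\Sigma$, the corresponding spectral measure is concentrated on $\Sigma$. Hence
\[
E_{H_{\omega_0}|_{M_{\omega_0}}}(I)=E_{H_{\omega_0}|_{M_{\omega_0}}}(\Sigma\cap I)=P_{M_{\omega_0}}E_{H_{\omega_0}}(\Sigma\cap I)P_{M_{\omega_0}}=0.
\]
For any $\lambda\in I^\circ$ one can choose an open $J\subset I$ with $\lambda\in J$, and then $E_{H_{\omega_0}|_{M_{\omega_0}}}(J)\le E_{H_{\omega_0}|_{M_{\omega_0}}}(I)=0$ puts $\lambda$ outside $\sigma(H_{\omega_0}|_{M_{\omega_0}})=\Sigma$, proving $\Sigma\cap I^\circ=\emptyset$.

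The only delicate point I anticipate is the continuous-model version of the first step, where the orthonormal-basis argument must be replaced by the tiling $\mathbb{R}^d=\bigcup_{i\in\mathbb{Z}^d}(\Lambda+i)$ together with the observation that a bounded operator vanishing on all $\chi_{\Lambda+i}f$ must vanish; the rest of the argument is a clean chaining of standard spectral identities plus the elementary fact that a positive-measure event meets every full-measure event.
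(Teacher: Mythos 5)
Your proposal is correct, and its first step (upgrading $\nu(\Sigma\cap I)=0$ to $E_{H_\omega}(\Sigma\cap I)=0$ on a full-measure set $\Omega'$ via stationarity, the orthonormal basis $\{\delta_j\}$ in the discrete case, and the tiling $\mathbb{R}^d=\bigcup_i\Lambda_i$ in the continuous case, then intersecting with $\Omega_\Sigma$) is exactly the paper's first step, including the Hilbert--Schmidt observation that $\mathrm{tr}(\chi_\Lambda E\chi_\Lambda)=0$ kills $E\chi_\Lambda$. Where you genuinely diverge is the endgame. The paper fixes $\omega_0\in\Omega'\cap\Omega_\Sigma$, forms the further reducing subspace $M=\mathrm{Rang}\,E_S(I)$ for $S=H_{\omega_0}|_{M_{\omega_0}}$, proves a separate lemma giving the sandwich $\sigma(S)\cap I^\circ\subset\sigma(S|_M)\subset\sigma(S)\cap I$, and then shows $M=\{0\}$ to conclude. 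You instead use only the restriction identity $E_S(A)=E_{H_{\omega_0}}(A)|_{M_{\omega_0}}$ (the paper's Lemma~\ref{pro}, which you should still justify, as reducing subspaces need not commute with spectral projections a priori --- but they do, and the paper proves it) together with $E_S(\mathbb{R}\setminus\sigma(S))=0$ to get $E_S(I)=E_S(\Sigma\cap I)=0$ directly, and then invoke the standard characterization $\lambda\in\sigma(S)\Leftrightarrow E_S((\lambda-\e,\lambda+\e))\neq 0$ on a neighborhood contained in $I$. This buys you a shorter argument that dispenses entirely with the paper's Lemma~\ref{proo} and the double restriction $S|_M$, at the cost of nothing: the interior $I^\circ$ enters in both arguments for the same reason (one needs an $\e$-neighborhood inside $I$), and your route makes that mechanism more transparent. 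Your parenthetical that $\Sigma\cap I$ is Borel because $\Sigma$ is closed is a point the paper leaves implicit and is worth keeping.
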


 For the proof we shall need Lemmas \ref{disc}, \ref{pro} and  \ref{proo} which are stated 
 later in this section.

\begin{proof}

A) {\sl Discrete model}.

{\it First step}.   
  
 From Lemma \ref {disc} we know that 
 $$ \nu (\Sigma ) =\mathbb E (\langle\delta _i,E_{H_\omega }(\Sigma\cap I )\delta _i \rangle)$$  for all $i\in \mathbb Z^d $.
 If we assume that $\nu (\Sigma\cap I )=0$,
 then for each fixed $ i\in \mathbb Z^d $   there exists a set $B_i \subset \Omega $ with $\mathbb P(B_i)=0$ such that
 for all $\omega \in \Omega \backslash B_i$ we have $ \langle\delta _i,E_{H_\omega }(\Sigma\cap I )\delta _i \rangle=0$. Consider the set 
 $\Omega':= \Omega \backslash(\cup _{i\in \mathbb Z^d}B_i)$, then $ \mathbb P(\Omega')=1 $ and for all $\omega \in \Omega'$,

 \begin{equation}\label {cero}
\langle\delta _i,E_{H_\omega }(\Sigma\cap I )\delta _i \rangle= \parallel E_{H_\omega }(\Sigma\cap I )\delta _i\parallel^2=0
\end{equation}
 Using  expression (\ref{cero}) we can see that for any $f\in l^2 (\mathbb Z^d)$ 
 
 \begin{equation}\label{f}
         \parallel E_{H_\omega }(\Sigma\cap I )f\parallel^2= \langle f,E_{H_\omega }(\Sigma\cap I )f \rangle=0 
\end{equation}
  for all $\omega \in \Omega'$. For this it is enough to write f in the basis $\{\delta _i\}_{i\in \mathbb Z^d}$, substitute in 
 (\ref{f}) and recall that $E_{H_\omega }(\Sigma\cap I )\delta _i$ for every $i$ is the zero vector by (\ref{cero}). Then 
$$ \parallel  E_{H_\omega }(\Sigma\cap I )f \parallel^2 = \parallel E_{H_\omega }(\Sigma\cap I )(\sum _i c_i \delta _i  )\parallel^2 =
\parallel \sum _i c_i E_{H_\omega }(\Sigma\cap I )\delta _i \parallel^2 =0
$$
     
 for all $\omega \in \Omega'$.
 
 Now, since $\Sigma$ is a $\mathbb P$- positive spectrum for $\{H_\omega \}_{\omega \in \Omega} $, (see Definition \ref{as}), there exists
 a set 
 $\Omega _\Sigma $ with $\mathbb P( \Omega _\Sigma )>0$ such that $\Sigma =\sigma (H_\omega|_{M _\omega})$ for all $\omega \in  \Omega _\Sigma $
 
 Set $\tilde\Omega  := \Omega'\cap \Omega _\Sigma $. Then $\mathbb P( \tilde\Omega )>0$ and therefore $\tilde\Omega\not=\emptyset$ and from (\ref{f}) we get
\begin{equation}\label{h}
\parallel E_{H_\omega }\bigl( \sigma (H_\omega| _{M _\omega})\cap I \bigr) f\parallel^2=0 
\end{equation}
for any  $f\in l^2 (\mathbb Z^d)$  if we require $\omega \in \tilde\Omega $. 

{\it Second step}.

 Fix $\omega_0 \in \tilde\Omega$ and
use the shorthand notation $S:=H_{\omega_0}| _{M _{\omega_0}} $.  By Lemma \ref{proo}, the set
 $M:= Rang E_S(I)$ is a reducing subspace for $S$.  The subspace $M \subset M _{\omega_0} $ is a reducing subspace for $H_{\omega_0}$ too.
One way to see this is to observe, using Lemma \ref{pro}, that the orthogonal projection $P_M$ onto the subspace $M$ is given by
$P_M= E_{H_{\omega _0}}(I) P_{M_{\omega _0}}$ where $P_{M_{\omega _0}}$ is the orthogonal projection onto $M_{\omega _0}$
and then notice that $P_M E_{H_{\omega _0}}(t)= E_{H_{\omega _0}}(t) P_M$ for all $t\in \mathbb R$. Commutation of the projection
with the spectral family is  known to 
be equivalent to reducibility, see Lemma \ref{proo} and \cite {Weid} thm. 7.28. 
 Now
\begin{equation}\label{hi}
                                       \parallel E_{H_{\omega_0}}\sigma (H_{\omega _0}|_M) f\parallel^2      = \parallel E_{H_{\omega_0}}\sigma (S|_M) f\parallel^2   \leq \parallel E_{H_{\omega_0}}(\sigma (S)\cap I) f\parallel^2 =0
\end{equation}
for every $f\in l_2(\mathbb Z^d)$. The first equality holds because from the  definition of restriction of an operator to a subspace we have 
$S|_M = \bigl (H_{\omega_0}| _{M _{\omega_0}}\bigr)|_M = H_{\omega _0}|_M$.
The inequality follows from Lemma \ref{proo} and the last equality from (\ref{h}).

If  $f\in M$,
$$E_{H_{\omega_0}}\bigl(\sigma (H_{\omega _0}|_M) \bigr)f=E_{H_{\omega_0}|_M}\bigl(\sigma (H_{\omega _0}|_M)\bigr) f =f$$
The first equality follows from  Lemma \ref{pro} and for the second, recall that 
$E_T(\sigma (T)) = id$, for any selfadjoint operator $T$,
 see for example Corollary 3.9 \cite {Teschl}. 
Using (\ref{hi}) then we conclude that $M=\{0\}$ and therefore
$$\sigma (H_{\omega_0}|_M) =\emptyset$$.
Hence
 $$\Sigma \cap I^\circ =\sigma (H_{\omega_0}|_{M_{\omega _0}}) \cap I^\circ =\sigma (S)\cap I^\circ  \subset \sigma (S|_M)= \sigma (H_{\omega_0}|_M) =\emptyset $$. 
 The contention follows from Lemma \ref{proo}.

B) {\sl Continuous model}.

{\it First step}

Assume $\nu (\Sigma \cap I )=0$. Then
 $$\nu (\Sigma \cap I )=\mathbb E( tr \chi _\Lambda  E_{H_\omega}(\Sigma \cap I )\chi _\Lambda)=\mathbb E( tr \chi _{\Lambda_i } E_{H_\omega}(\Sigma \cap I )\chi _{\Lambda_i})=0$$
  for every $i\in \mathbb Z^d$, according to  Lemma \ref{disc}, where $\Lambda _i := [-1/2+i,i+1/2]^d$.
  Let $\{\varphi _n\}_{n\in \mathbb N}$ be an orthonormal basis of $L_2(\mathbb R^d)$ and fix $i\in\mathbb Z^d$ .  Then 

\begin{eqnarray*}
\mathbb E( tr \chi _{\Lambda_i } E_{H_\omega}(\Sigma \cap I )\chi _{\Lambda_i})&=&
 \mathbb E\biggl( \sum _{n=1}^\infty \langle\varphi _n,\chi _{\Lambda_i } E_{H_\omega}(\Sigma \cap I )\chi _{\Lambda_i}\varphi _n\rangle \biggr)
= \\ &=&\mathbb E( \biggl (\sum _{n=1}^\infty     \parallel E_{H_\omega}(\Sigma \cap I )\chi _{\Lambda_i}\varphi _n\parallel^2 \biggr)=0  
\end{eqnarray*}
 and  therefore there exists a set $B_i\subset \Omega $  with $\mathbb P(B_i)=0$ such that
\begin{equation}\label{hh}
 \sum _{n=1}^\infty     \parallel E_{H_\omega}(\Sigma \cap I )\chi _{\Lambda_i}\varphi _n\parallel^2=0 
\end{equation}
for every $\omega \in \Omega \backslash B_i$ .
 Now take  $f\in L_2(\mathbb R^d)$ and write  $f$ 
with respect to the basis $\{\varphi _n\}_{n\in \mathbb N}$. We get, for any fixed $i\in  \mathbb Z^d$
\begin{equation}\label{ff}
\parallel E_{H_\omega}(\Sigma \cap I )\chi _{\Lambda_i}f\parallel =
 \parallel E_{H_\omega}(\Sigma \cap I )\chi _{\Lambda_i}(\sum _{n=1}^\infty  c_n \varphi _n )\parallel  
=\parallel \sum _{n=1}^\infty  c_n E_{H_\omega}(\Sigma \cap I )\chi _{\Lambda_i}\varphi _n \parallel =0
\end{equation}
for all $\omega \in \Omega \backslash B_i$.

The second equality follows from the continuity of the operators $E_{H_\omega}(\Sigma \cap I )$ and $\chi _{\Lambda_i}$
and the last equality because  $E_{H_\omega}(\Sigma \cap I )\chi _{\Lambda_i}\varphi _n$ is the zero vector for all $n$ almost surely,
which follows from (\ref{hh}).
 Observe that $f=\sum_{i\in \mathbb Z^d}\chi_{\Lambda_i}f $.
Then,
\begin{equation}\label{ojo}
\langle f,E_{H_\omega}(\Sigma \cap I )f\rangle=\langle f,E_{H_\omega}(\Sigma \cap I )\sum_{i\in \mathbb Z^d}\chi_{\Lambda_i}f\rangle =
\langle f,\sum_{i\in \mathbb Z^d}E_{H_\omega}(\Sigma \cap I )\chi_{\Lambda_i}f\rangle =0
\end{equation}
for every $ \omega \in \Omega':= \Omega \backslash(\cup _{i\in \mathbb Z^d}B_i)$.

{\it Second step}

 Follows as in the discrete case A).
\end{proof}
  As special case of theorem \ref{main} we have the following
  
\begin{corolary}\label{cor}
Let $\{H_\omega \}_{\omega \in \Omega} $ be as in theorem  \ref{main}. Assume moreover that this family is ergodic.
If the density of states measure $\nu (\cdot)$ is absolutely continuous with respect to a measure $\gamma (\cdot)$
then $\gamma (I\cap \sigma_\star)=0 $ implies  $I^\circ \cap \sigma_\star=\emptyset$, where $I$ is any closed interval and
$\sigma_\star$ is any almost sure spectrum for $H_\omega $. We could take for example $\star =sc, pp, ac$ or any of
the almost sure spectra mentioned in remark \ref{as}.

\end{corolary}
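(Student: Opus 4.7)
The plan is to derive this corollary as a direct specialization of Theorem \ref{main}, with only a short translation of hypotheses between the two statements. First I would observe that any almost sure spectrum $\sigma_\star$ is automatically a $\mathbb{P}$-positive spectrum in the sense of Definition \ref{as}: by definition there exists $\Omega_{\sigma_\star}\subset\Omega$ with $\mathbb{P}(\Omega_{\sigma_\star})=1$ such that $\sigma_\star=\sigma(H_\omega|_{M_\omega})$ for every $\omega\in\Omega_{\sigma_\star}$, and since $1>0$ this is in particular $\mathbb{P}$-positive. Ergodicity is used only to guarantee that the spectra listed in Remark \ref{as} (namely $\Sigma_{sc}, \Sigma_{pp}, \Sigma_{ac}$, and the finer $\alpha$-decompositions from Theorem \ref{las}) are indeed almost sure spectra; once that is in hand, the corollary works for any of these choices of $\star$.

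Next, I would unpack the absolute-continuity hypothesis $\nu\ll\gamma$: for every Borel set $A\subset\mathbb{R}$, $\gamma(A)=0$ implies $\nu(A)=0$. Applying this with $A=I\cap\sigma_\star$ turns the assumption $\gamma(I\cap\sigma_\star)=0$ into $\nu(I\cap\sigma_\star)=0$. At this point all the hypotheses of Theorem \ref{main} are verified with $\Sigma=\sigma_\star$: the family $\{H_\omega\}$ is $\mathbb{Z}^d$ stationary with translation unitaries (inherited from the corollary's assumption that the setup is as in Theorem \ref{main}), the set $\sigma_\star$ is a $\mathbb{P}$-positive spectrum, and $\nu(\sigma_\star\cap I)=0$.

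Invoking Theorem \ref{main} directly gives $\sigma_\star\cap I^\circ=\emptyset$, which is exactly the conclusion of the corollary. There is essentially no obstacle here beyond being careful that the corollary's formulation quantifies over arbitrary closed intervals $I$ (possibly unbounded) and that $\sigma_\star$ ranges over the list of almost sure spectra from Remark \ref{as}; both are already covered by Theorem \ref{main} and Remark \ref{as} respectively. The only delicate bookkeeping is verifying that $\sigma_\star\cap I$ is a Borel set so that $\gamma$ and $\nu$ can be evaluated on it, which follows because $\sigma_\star$ is closed (being a spectrum) and $I$ is closed.
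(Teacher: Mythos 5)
Your proposal is correct and matches the paper's intent exactly: the paper gives no separate proof, presenting the corollary as an immediate special case of Theorem \ref{main}, obtained precisely by noting that an almost sure spectrum is a $\mathbb{P}$-positive spectrum and that $\nu\ll\gamma$ converts $\gamma(I\cap\sigma_\star)=0$ into $\nu(I\cap\sigma_\star)=0$. Your additional remarks on ergodicity and Borel measurability are accurate bookkeeping and do not change the argument.
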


 The following lemmas are more or less standard facts.

\begin{lem}\label {disc}
  Let  $\{H_\omega \}_{\omega \in \Omega} $ be as theorem \ref{main}. Then, for every $i\in \mathbb Z^d$ \\

a)  $\nu (\cdot)=
\mathbb E( tr \chi _{\Lambda_i}  E_{H_\omega}(\cdot)\chi _{\Lambda_i}) $ in the continuous case\\
where $\Lambda _i := [-1/2+i,i+1/2]^d$. We shall write $\Lambda$  for    $\Lambda _0$.\\

b)   $\nu (\cdot)=\mathbb E( \langle\delta _i,E_{H_\omega}(\cdot)\delta _i\rangle)$ in the discrete case.\\

\end{lem}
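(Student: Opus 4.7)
The plan is to use the $\mathbb{Z}^d$-stationarity to translate the reference site $0$ to any $i\in\mathbb{Z}^d$, and then appeal to the measure-preserving property of $T_i$ to absorb the shift under the expectation. In both models the key algebraic fact is the covariance relation $H_{T_i\omega}=U_iH_\omega U_i^*$ from Definition \ref{ergop}, which by the functional calculus yields
\[
E_{H_{T_i\omega}}(A)=U_i\,E_{H_\omega}(A)\,U_i^{*}
\]
for every Borel set $A\subset\mathbb{R}$, or equivalently $U_i^{*}E_{H_\omega}(A)U_i=E_{H_{T_i^{-1}\omega}}(A)$.

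For part b), the discrete case, I would first note that with $(U_i\varphi)(n)=\varphi(n-i)$ one has $U_i\delta_0=\delta_i$. Then
\[
\langle\delta_i,E_{H_\omega}(A)\delta_i\rangle
=\langle U_i\delta_0,E_{H_\omega}(A)U_i\delta_0\rangle
=\langle\delta_0,E_{H_{T_i^{-1}\omega}}(A)\delta_0\rangle.
\]
Taking $\mathbb{E}$ and using that $T_i^{-1}$ preserves $\mathbb{P}$, the right-hand side equals $\mathbb{E}(\langle\delta_0,E_{H_\omega}(A)\delta_0\rangle)=\nu(A)$, which is exactly the claim.

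For part a), the continuous case, I would observe that multiplication by $\chi_{\Lambda_i}$ and translation by $U_i$ satisfy $U_i\chi_\Lambda U_i^{*}=\chi_{\Lambda_i}$, since $\chi_\Lambda(x-i)=\chi_{\Lambda_i}(x)$. Hence
\[
\chi_{\Lambda_i}E_{H_\omega}(A)\chi_{\Lambda_i}
=U_i\,\chi_\Lambda\bigl(U_i^{*}E_{H_\omega}(A)U_i\bigr)\chi_\Lambda\,U_i^{*}
=U_i\,\chi_\Lambda E_{H_{T_i^{-1}\omega}}(A)\chi_\Lambda\,U_i^{*}.
\]
Since the trace is unitarily invariant, $\operatorname{tr}\chi_{\Lambda_i}E_{H_\omega}(A)\chi_{\Lambda_i}=\operatorname{tr}\chi_\Lambda E_{H_{T_i^{-1}\omega}}(A)\chi_\Lambda$, and the measure-preserving property of $T_i^{-1}$ again gives $\mathbb{E}(\operatorname{tr}\chi_{\Lambda_i}E_{H_\omega}(A)\chi_{\Lambda_i})=\mathbb{E}(\operatorname{tr}\chi_\Lambda E_{H_\omega}(A)\chi_\Lambda)=\nu(A)$.

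The only mildly delicate point is the unitary invariance of the trace in part a); one must know that $\chi_\Lambda E_{H_\omega}(A)\chi_\Lambda$ is a positive bounded operator whose trace (possibly $+\infty$) is well defined via any orthonormal basis, and that conjugation by the unitary $U_i$ simply re-expresses this trace in the basis $\{U_i\varphi_n\}$. Everything else is routine functional calculus plus the fact that the definition of a $\mathbb{Z}^d$-stationary family requires nothing beyond measure preservation, so ergodicity is not needed.
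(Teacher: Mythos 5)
Your argument is correct and follows essentially the same route as the paper: both rest on the covariance relation $E_{H_{T_i\omega}}(A)=U_iE_{H_\omega}(A)U_i^{*}$, the measure-preserving property of the $T_i$, and (in the continuous case) the basis-independence of the trace of a positive operator, the only cosmetic difference being that you translate site $i$ back to site $0$ via $T_i^{-1}=T_{-i}$ while the paper pushes site $0$ forward via $T_i$. No gaps.
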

\begin{proof}

First recall two facts:
If $h$ is a measurable function and $T_i$ is measure preserving, then 
\begin{equation}\label{li}
\mathbb E (h(T_i \omega ))= \int_{\Omega} h(T_i\omega )d \mathbb P(\omega )=
\int_{\Omega} h(\omega )d \mathbb P(T_i^{-1}\omega )=\int_{\Omega }h(\omega )d\mathbb P(\omega )=\mathbb E (h( \omega ))
\end{equation}.
 See for example \cite{Pet} p. 13.
  
The second is :
  
 If $S$ is a selfadjoint operator and $U$ unitary operator, then for any bounded measurable function $f$ we have
\begin{equation}\label{laa}
f(USU^*)=U f(S)U^* 
\end{equation}.
See Lemma 4.5 of \cite{Kirsch}.

Case a). Continuous model.

Let $h(\omega ):= tr \chi _\Lambda  E_{H_\omega}(A)\chi _\Lambda $.
  Then from (\ref{li}), for every $i \in \mathbb Z^d$,
\begin{equation}\label{lo}
\mathbb E( tr \chi _\Lambda  E_{H_\omega}(\cdot)\chi _\Lambda)  =\mathbb E( tr \chi _{\Lambda}  E_{H_{T_i\omega}}(\cdot)\chi _\Lambda)  
\end{equation}
 and from (\ref{laa}) 
\begin{equation}\label{lu}
\mathbb E( tr \chi _{\Lambda}  E_{H_{T_i\omega}}(\cdot)\chi _\Lambda)= \mathbb E( tr \chi _{\Lambda} U_i E_{H_{\omega}}(\cdot)U_i^*\chi _\Lambda) 
\end{equation}
 Let $\{\psi _n\}_{n\in \mathbb N}$ be an orthonormal basis for $L_2(\mathbb R^d)$ and denote $\varphi  _n:=U_i^*\psi  _n$.
\begin{eqnarray}\label{la}
\mathbb E( tr \chi _{\Lambda} U_i E_{H_{\omega}}(\cdot)U_i^*\chi _\Lambda) &=&
\mathbb E \biggl(\sum_n\langle  \psi _n,   \chi _{\Lambda} U_i E_{H_{\omega}}(\cdot)U_i^*\chi _\Lambda \psi _n \rangle \biggr )\nonumber\\
&=&\mathbb E\biggl(\sum_n\langle U_i^*\chi _{\Lambda}\psi _n,E_{H_{\omega}}(\cdot)U_i^*\chi _\Lambda \psi _n \rangle \biggr)\nonumber\\
&=&\mathbb E\biggl(\sum_n\langle \chi _{\Lambda}\varphi  _n,E_{H_{\omega}}(\cdot)\chi _\Lambda \varphi  _n \rangle \biggr)\nonumber\\
&=&\mathbb E\biggl (\sum_n\langle \chi _{\Lambda_i}\psi _n,E_{H_{\omega}}(\cdot)\chi _{\Lambda_i}\psi_n  \rangle \biggr)\nonumber\\
&=&\mathbb E( tr \chi _{\Lambda_i} E_{H_{\omega}}(\cdot)\chi _{\Lambda_i})
\end{eqnarray}
 Then from (\ref{lo})-(\ref{la}) , assertion a) of the Lemma follows.

Case b). Discrete model. 
\begin{eqnarray*}
\mathbb E( \langle\delta _0,E_{H_\omega}(\cdot)\delta _0\rangle)&=&\mathbb E(\langle \delta _0,E_{H_{T_i\omega}}(\cdot)\delta _0\rangle)\\
&=&\mathbb E(\langle \delta _0,U_i E_{H_{\omega}}(\cdot)U_i^*\delta _0\rangle)\\
&=&\mathbb E(\langle U_i^*\delta _0, E_{H_{\omega}}(\cdot)U_i^*\delta _0\rangle)\\
&=&\mathbb E(\langle \delta _{-i}, E_{H_{\omega}}(\cdot)\delta _{-i}\rangle)
\end{eqnarray*}
The first equality follows from (\ref{li}) and the second from (\ref{laa}).
 (In fact from \cite {LJ} and \cite{dRS} $\mathbb P$ almost surely the measures $\langle \delta _j,E_{H_\omega}(\cdot)\delta _j\rangle)$
 are equivalent for all $j$, in many cases.)
\end{proof}
\begin{lem}\label {pro}
Let $S$ be a selfadjoint operator in a Hilbert space $\mathcal H$ and $M$ a closed subspace of $\mathcal H$ 
which is a reducing subspace for $S$. Let $S_M:=S|_M$ be restriction of $S$ to $M$. Then $S_M$ is selfadjoint and for $t\in \mathbb R$
$$  E_S(t)|_ M  =E_{S_M}(t)$$ where $ E_S(t)$ and $E_{S_M}(t) $ denote the spectral families of orthogonal projections given by the spectral theorem   associated to the operators $S$ and $S_M$ respectively. $E_S(t)|_ M$ denotes the restriction of $E_S(t)$ to the subspace $M$.
\end{lem}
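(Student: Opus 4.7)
The plan is to use the characterization of reducing subspaces via commutation with the spectral family (as the author himself invokes through Weidmann Thm.~7.28), build a candidate spectral family on $M$ by restricting $E_S(t)$, and then invoke the uniqueness part of the spectral theorem.

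First I would establish selfadjointness of $S_M$ as an operator on the Hilbert space $M$. Since $M$ reduces $S$, the orthogonal projection $P_M$ leaves $D(S)$ invariant and commutes with $S$ on $D(S)$; equivalently, $P_M$ commutes with the resolvent $(S-z)^{-1}$ for every $z\in\mathbb{C}\setminus\mathbb{R}$. A short calculation then shows that $S_M$, regarded as a densely defined operator in $M$, is symmetric, and its deficiency indices vanish because $(S_M - z)^{-1} = (S-z)^{-1}|_M$ maps $M$ onto $D(S_M) = M\cap D(S)$ for non-real $z$.

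Next I would define the candidate projections $F(t) := E_S(t)|_M$ on $M$. Because $P_M$ commutes with every bounded Borel function of $S$ (in particular with each $E_S(t)$), we have $E_S(t)P_M = P_M E_S(t) P_M$, so $F(t)$ is an orthogonal projection in $M$. I then need to check the four standard properties of a resolution of the identity: monotonicity and right continuity are inherited directly from those of $\{E_S(t)\}_{t\in\mathbb R}$, the limit $F(t)\to 0$ as $t\to -\infty$ is immediate, and the limit $F(t)\to I_M$ as $t\to +\infty$ follows because $E_S(t)\to I_{\mathcal H}$ strongly, so for $f\in M$ we get $F(t)f = E_S(t)f \to f$.

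The third step is to identify $\int t\,dF(t)$ with $S_M$. For $f\in D(S_M) = M\cap D(S)$ one has $f=P_M f$, and since $P_M$ commutes with $E_S(t)$, the vector $E_S(t)f$ again lies in $M$, so $\langle f, F(t) f\rangle_M = \langle f, E_S(t)f\rangle_{\mathcal H}$. Integrating against $t$ and using the spectral theorem for $S$ gives $\int t\, d\langle f,F(t)f\rangle = \langle f, Sf\rangle = \langle f, S_M f\rangle_M$ on $D(S_M)$, with the corresponding identity for $\int t^2\,d\|F(t)f\|^2 = \|S_M f\|^2$. Hence $S_M = \int t\,dF(t)$. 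By the uniqueness of the spectral family associated with a selfadjoint operator, $F(t) = E_{S_M}(t)$, which is the desired identity.

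The main obstacle I expect is not any individual step but being careful about domains: verifying that $D(S_M) = M\cap D(S)$ is exactly the set of $f\in M$ for which $\int t^2\,d\|F(t)f\|^2 <\infty$, so that the spectral integral $\int t\,dF(t)$ genuinely reproduces $S_M$ rather than merely extending it. This is handled by the commutation $P_M E_S(t) = E_S(t) P_M$ together with the fact that $f\in D(S)$ iff $\int t^2\,d\|E_S(t)f\|^2<\infty$; everything else is bookkeeping.
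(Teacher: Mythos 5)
Your proposal is correct and follows essentially the same route as the paper: restrict the spectral family $E_S(t)$ to $M$, verify the resolution-of-the-identity axioms, identify the operator it generates with $S_M$, and invoke uniqueness of the spectral family. If anything, you are more careful than the paper about the domain identification $D(S_M)=M\cap D(S)$ versus the domain of $\int t\,dF(t)$, a point the paper's weak-integral computation (carried out only for $f\in D(S)\cap M$) leaves implicit.
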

\begin{proof}
 The restriction of $E_S(t)$ to the subspace M, denoted by $E_S\bigl|_M(t)$, is a spectral family in the Hilbert Space $M$.





The properties required (see f.e. \cite {Weid} section 7.2):

i) $E_S| _M(t)^2=E_S| _M(t)$ and $E_S|_M(t) =E_S|_M(t)^*$, for every $t\in \mathbb R$

ii) $E_S|_M(s) \leq E_S|_M(t) $ for $s\leq t$ (monotonicity)

iii)$E_S|_M(t+\varepsilon ) \longrightarrow E_S|_M(t) $ for all $t\in \mathbb R$ as $\varepsilon  \longrightarrow 0+$ (continuity from the right)

iv)$E_S|_M(t) g \longrightarrow 0$ for every $g\in \mathcal H$  as $t\longrightarrow -\infty$, $E_S|_M(t) g\longrightarrow Id$ 
for every $g \in\mathcal H$  as $t\longrightarrow \infty$.

follow from the corresponding properties for $E_S(t)$.

According to (see \cite {Weid} thm. 7.28), the operator $S_M$ is selfadjoint. There exists therefore a 
corresponding spectral family $E_{S_M}(t)$ such that 
$S_M f  =\int\lambda d E_{S_M}(\lambda )f$ where  by definition the vector $\int\lambda d E_{S_M}(\lambda )f$
is the one such that  $\langle \int\lambda d E_{S_M}(\lambda )f, v\rangle = \int \lambda d \langle E_{S_M}(\lambda )f,v\rangle $ for $v\in M$
 see \cite {Ka}p. 356. Therefore we have for $f\in  D(S)\cap M$
 
 \begin{eqnarray*}
\langle \int\lambda d E_{S_M}(\lambda )f ,v \rangle &=& \langle S_M f ,v \rangle  =  \langle \int\lambda d E_{S}(\lambda )f ,v \rangle \\&=&
 \int \lambda d \langle E_S(\lambda )f ,v \rangle = \int \lambda d\langle E_S|_M f  , v\rangle \\ &=&
 \langle \int \lambda d(E_S(\lambda) |_M)f, v  \rangle 
\end{eqnarray*}

 Since the spectral family associated to a selfadjoint operator is unique, see \cite {Weid} thm 7.17, we obtain
 $$E_S(t )f =  E_S(t)|_ M f =E_{S_M}(t)f$$ if $f \in M$.

\end{proof}

Denote by $Rang T=\{Tf \mid f\in D(T)\}$ the range of the operator $T$.

In the following Lemma we use the same notation as in Lemma \ref{pro}.

\begin{lem}\label{proo}
It is possible to choose  $M=Rang E_S(I)$ as a reducing subspace for $S$ in Lemma \ref{pro}. Here  $I$ is a closed interval
$I= \{ x\in \mathbb R \mid a\leq x\leq b \}$, which could be unbounded ($a=-\infty$ or $b=\infty$ are allowed).
 In this case 
 $$\sigma (S)\cap I^\circ  \subset \sigma (S_M)  \subset \sigma (S)\cap I $$
where $I^\circ $ denotes the interior of $I$ 
\end{lem}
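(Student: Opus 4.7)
My plan is to verify the three claims in the statement in sequence: reducibility of $M$, the spectral containments $\sigma(S_M)\subset \sigma(S)\cap I$, and $\sigma(S)\cap I^\circ \subset \sigma(S_M)$.

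First I would show that $M=\mathrm{Rang}\,E_S(I)$ reduces $S$. The key observation is that $P_M=E_S(I)$ (since $E_S(I)$ is itself an orthogonal projection whose range is $M$), and spectral projections of a selfadjoint operator commute with every $E_S(t)$. By the standard criterion (invoked in the preceding discussion via \cite{Weid} thm.~7.28), commutation of $P_M$ with the spectral family of $S$ is equivalent to $M$ being reducing. Alternatively, one checks directly that $E_S(I)$ commutes with the resolvent of $S$ and with $S$ itself on $D(S)$, giving $P_M u\in D(S)$ and $SP_M u=P_M Su \in M$ for $u\in D(S)$.

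Next I would establish $\sigma(S_M)\subset \sigma(S)\cap I$. The inclusion $\sigma(S_M)\subset \sigma(S)$ is standard: if $\lambda\notin\sigma(S)$, then $(S-\lambda)^{-1}$ leaves $M$ invariant (because $M$ reduces $S$, so its resolvent commutes with $P_M$), and its restriction to $M$ is a bounded inverse of $S_M-\lambda$, showing $\lambda\notin\sigma(S_M)$. To see $\sigma(S_M)\subset I$, use Lemma \ref{pro} to write $E_{S_M}(t)=E_S(t)|_M$ and compute, using $P_M=E_S(I)$:
\begin{equation*}
E_{S_M}(t)=E_S(t)E_S(I)\bigl|_M=E_S((-\infty,t]\cap I)\bigl|_M.
\end{equation*}
Writing $I=[a,b]$ (with $a=-\infty$ or $b=+\infty$ permitted), for $t<a$ this gives $E_{S_M}(t)=0$, and for $t\ge b$ it gives $E_{S_M}(t)=\mathrm{Id}_M$. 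Hence the spectral family of $S_M$ is constant on $\mathbb{R}\setminus I$, and $\sigma(S_M)\subset I$.

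Finally I would prove $\sigma(S)\cap I^\circ \subset \sigma(S_M)$. Take $\lambda\in\sigma(S)\cap I^\circ$. Because $\lambda$ lies in the interior of $I$, there is $\varepsilon_0>0$ with $(\lambda-\varepsilon,\lambda+\varepsilon)\subset I$ for all $0<\varepsilon<\varepsilon_0$. Since $\lambda\in\sigma(S)$, the spectral theorem yields $E_S\bigl((\lambda-\varepsilon,\lambda+\varepsilon)\bigr)\neq 0$ for every $\varepsilon>0$; but for $\varepsilon<\varepsilon_0$ this projection is $\leq E_S(I)=P_M$, so its range lies in $M$, and its restriction to $M$ equals $E_{S_M}\bigl((\lambda-\varepsilon,\lambda+\varepsilon)\bigr)$ by Lemma \ref{pro}, which is therefore nonzero. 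Thus $\lambda\in\sigma(S_M)$.

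The only delicate point is the asymmetry between the two containments: the right-hand inclusion only gives $I$, not $I^\circ$, while the left-hand one only gives $I^\circ$, not $I$. This is unavoidable since an endpoint of $I$ can belong to $\sigma(S)$ without belonging to $\sigma(S_M)$ (spectral mass accumulating from outside $I$ is discarded in passing to $S_M$). Keeping careful track of this — and of the edge cases $a=-\infty$ or $b=+\infty$, where the $E_S(t)|_M$ limits at $\pm\infty$ must be verified by hand — is the step where most care is needed; everything else follows cleanly from Lemma \ref{pro} and the functional calculus.
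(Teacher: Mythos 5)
Your proof is correct and follows essentially the same route as the paper: both arguments rest on Lemma \ref{pro} (the identity $E_{S_M}(t)=E_S(t)|_M$) together with the characterization of the spectrum through spectral projections, and both treat the endpoint asymmetry identically. The only cosmetic difference is that you get $\sigma(S_M)\subset\sigma(S)$ via the resolvent and $\sigma(S_M)\subset I$ via constancy of the spectral family off $I$, whereas the paper derives both containments from the single equivalence $\lambda\in\sigma(S_M)\Longleftrightarrow E_S((\lambda-\varepsilon,\lambda+\varepsilon)\cap I)\not=0$ for every $\varepsilon>0$.
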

\begin{proof}
$Rang E_S(I)$ is a reducing subspace because $ E_S(I)E_S(t)=E_S(t)E_S(I) $ for all $t\in \mathbb R$ and a space is a reducing
subspace if and only if the projection on this space commutes with the spectral family of the operator. See \cite{Weid} theorem 7.28.

Recall that for a selfadjoint operator $T$,  $$\lambda \in \sigma (T) \quad\mbox {if and only if}\quad E_T((\lambda -\varepsilon ,\lambda +\varepsilon ))\not=0$$.
for every $\varepsilon >0$. See for example \cite{Teschl} thm 3.8. By Lemma \ref{pro}, 

$$ E_{S_M}((\lambda -\varepsilon ,\lambda +\varepsilon ))=
 E_{S}((\lambda -\varepsilon ,\lambda +\varepsilon ))|_M $$
 where $M= Rang E_S(I)$. Therefore to find the spectrum $\sigma (S_M)$ of $S_M$ we may look for the points 
 $\lambda$  for which there exists $g_\varepsilon \in M = Range E_S(I)$ such that $E_S(( \lambda -\varepsilon ,\lambda +\varepsilon ))g_\varepsilon \not=0$ 
 for every $\varepsilon >0$
Therefore $$\lambda \in \sigma (S_M) \Longleftrightarrow E_S((\lambda -\varepsilon ,\lambda +\varepsilon ))E_S(I)f_\varepsilon = 
E_S((\lambda -\varepsilon ,\lambda +\varepsilon )\cap I)f_\varepsilon \not=0$$
for some $f_\varepsilon \in \mathcal H$, for every $\varepsilon >0$, that is 
$$\lambda \in \sigma (S_M) \Longleftrightarrow E_S((\lambda -\varepsilon ,\lambda +\varepsilon )\cap I)\not=0$$
for every  $\varepsilon >0$.

Assume $\lambda \in I^\circ \cap \sigma (S)$. Take  $\varepsilon >0$ such that $(\lambda -\varepsilon ,\lambda +\varepsilon )\subset I$.
Since  $ \lambda \in \sigma (S)$ then  $$E_S((\lambda -\varepsilon ,\lambda +\varepsilon )\cap I)=E_S((\lambda -\varepsilon ,\lambda +\varepsilon ))\not=0$$
It can be seen this happens  for every $\varepsilon >0$ and then we obtain $$\sigma (S)\cap I^\circ  \subset \sigma (S_M) $$.

If $\lambda \not\in I$, then there exists $\varepsilon>0$ such that $(\lambda -\varepsilon ,\lambda +\varepsilon )\cap  I =\emptyset$. Then
$E_S((\lambda -\varepsilon ,\lambda +\varepsilon )\cap I)=E_S(\emptyset)=0$ and $\lambda \not\in \sigma (S_M)$. Therefore
$\sigma (S_M)\subset I$.
Since $E_S((\lambda -\varepsilon ,\lambda +\varepsilon )\cap I)\not=0$ implies $E_S((\lambda -\varepsilon ,\lambda +\varepsilon ))\not=0$,
then we have  $\sigma (S_M)\subset \sigma (S)\cap I$, and the Lemma is proved.

\end{proof}

\section {Examples}

 Here we shall consider situations where the IDS is absolutely continuous and therefore Theorem \ref{main} can be applied to
  obtain that  $|\Sigma \cap J|=0$ implies $\Sigma\cap J=\emptyset$, where $\Sigma$ is an almost sure spectrum and $J$ a closed interval. 
 We do not intend to be exhaustive and just mention some of the interesting cases.

 In \cite {H08} random  Schr\"{o}dinger operators of the form $H_{\omega }(\lambda )=H_0+\lambda V_\omega $ on $L_2 (\mathbb R^d)$ are considered for
 $\lambda \in \mathbb R$.
The  operator $H_0=(-i\nabla -A_0)^2 +V_0$ is nonrandom. The random Anderson type potential $V_\omega $ is 
constructed from the nonzero single-site potential $u\geq 0$ as
$$ V_\omega (x)=\sum _{j\in \mathbb Z^d}\omega_ju(x-j)$$
where the $\omega _j, j\epsilon \mathbb Z^d$ are random variables.

 Consider the hypotheses :
 
{\bf H1}) The background operator $H_0=(-i\nabla -A_0)^2 +V_0$ is lower semibounded, $\mathbb Z^d$-periodic
Schr\"{o}dinger operator with real valued,  $\mathbb Z^d$-periodic potential $V_0$ and a $\mathbb Z^d$-periodic
vector potential $A_0$. It is assumed that $V_0$ and $A_0$ are sufficiently regular  so that $H_0$ is essentially
selfadjoint on $C_0^\infty(\mathbb R^d)$.

{\bf H2}) The periodic operator $H_0$ has the unique continuation property, that is, for any $E \in \mathbb R$ and for any function
$\phi \in H_{loc}^2$, if $(H_0-E)\phi =0$ and if $\phi$ vanishes on an open set, then $\phi \equiv 0$

{\bf H3}) The nonzero, nonnegative, compactly-supported single-site potential $u\in  L_0^\infty(\mathbb R^d)$, with 
$\parallel u \parallel_{\infty}\leq 1$, and it is strictly positive on a nonempty open set.

{\bf H4} the random coupling constants $\omega _j, j\epsilon \mathbb Z^d$  are independent and 
identically distributed. The common distribution has density $ h_0 \in L_{\infty}(\mathbb R)$ with
supp $h_0 \subset [0,1]$.
 
Then according to  \cite{H08} Thm. 4.4 and \cite{chk07} Corollary 1.1). one has the following 
 \begin{thm}
Assume hypotheses H1)-H4). Then the IDS for the random family $H_\omega (\lambda )$, for $\lambda \not=0$ is locally Lipschitz 
continuous on $\mathbb R$.
\end{thm}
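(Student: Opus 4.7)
The plan is to deduce local Lipschitz continuity of the IDS from a uniform Wegner estimate obtained by spectral averaging against the bounded density $h_0$, with the decisive analytic ingredient being a quantitative unique continuation principle for the periodic background $H_0$ that uses H2).

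First I would pass to finite volume. Let $H_{\omega,L}$ denote the restriction of $H_\omega(\lambda)$ to the cube $\Lambda_L=[-L/2,L/2]^d$ with, say, periodic boundary conditions. One has
\begin{equation*}
N(E_2)-N(E_1) \;=\; \lim_{L\to\infty}\frac{1}{|\Lambda_L|}\,\mathbb{E}\bigl(\mathrm{tr}\,\chi_{[E_1,E_2]}(H_{\omega,L})\bigr),
\end{equation*}
so it suffices to establish, for every bounded interval $I_0\subset\mathbb{R}$, a Wegner-type bound
\begin{equation*}
\mathbb{E}\bigl(\mathrm{tr}\,\chi_{[E_1,E_2]}(H_{\omega,L})\bigr)\;\leq\; C(I_0,\lambda)\,|E_2-E_1|\,|\Lambda_L|
\end{equation*}
uniform in $L$ and in $[E_1,E_2]\subset I_0$.

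Next I would implement spectral averaging. Setting $u_j:=u(\,\cdot\,-j)$ and conditioning on the couplings $\{\omega_k\}_{k\neq j}$, Stone's formula together with a standard resolvent identity yields the single-site estimate
\begin{equation*}
\int\langle u_j^{1/2}\phi,\,\chi_{[E_1,E_2]}(H_\omega)\,u_j^{1/2}\phi\rangle\,h_0(\omega_j)\,d\omega_j \;\leq\; \frac{\pi\,\|h_0\|_\infty}{|\lambda|}\,|E_2-E_1|\,\|\phi\|^2
\end{equation*}
for every $\phi\in L_2(\mathbb{R}^d)$. Summing over $j\in\Lambda_L\cap\mathbb{Z}^d$ and evaluating on an orthonormal basis of $\mathrm{Ran}\,\chi_{[E_1,E_2]}(H_{\omega,L})$ produces an upper bound on $\mathbb{E}(\mathrm{tr}\,\chi_{[E_1,E_2]}(H_{\omega,L}))$ proportional to the expected trace of $\chi_{[E_1,E_2]}(H_{\omega,L})\,U_L\,\chi_{[E_1,E_2]}(H_{\omega,L})$, where $U_L:=\sum_{j\in\Lambda_L\cap\mathbb{Z}^d}u_j$.

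The main obstacle, and the step that forces $\lambda\neq 0$ and uses all of H1)--H3), is to show that once the spectrum has been localized to the bounded window $I_0$, the operator $U_L$ dominates a positive multiple of the identity on $\mathrm{Ran}\,\chi_{I_0}(H_{\omega,L})$, uniformly in $L$ and in the disorder. Because $u$ need only be strictly positive on some open set (H3), no elementary covering argument works: the supports of the $u_j$ may fail to cover $\mathbb{R}^d$. Here I would invoke the quantitative unique continuation principle of Combes--Hislop--Klopp, whose proof rests on Carleman estimates for $H_0$ enabled by H2), together with a geometric decomposition of $\Lambda_L$ into unit cells; this produces $\kappa=\kappa(I_0)>0$ with
\begin{equation*}
\chi_{I_0}(H_{\omega,L})\,U_L\,\chi_{I_0}(H_{\omega,L})\;\geq\;\kappa\,\chi_{I_0}(H_{\omega,L}),
\end{equation*}
uniformly in $L$ and in $\omega$. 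Combining this positivity with the spectral averaging bound yields the desired Wegner estimate with $C(I_0,\lambda)=\pi\,\|h_0\|_\infty/(\kappa\,|\lambda|)$, and hence the claimed local Lipschitz continuity of $N$. The heart of the argument is the quantitative unique continuation step; everything else is standard spectral averaging machinery.
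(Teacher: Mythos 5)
The paper does not actually prove this theorem: it is imported verbatim from the literature, with the proof attributed to Hislop's lecture notes (Thm.\ 4.4 of \cite{H08}) and to the optimal Wegner estimate of Combes--Hislop--Klopp (Corollary 1.1 of \cite{chk07}). Your sketch is therefore doing more than the paper does, and it correctly reconstructs the strategy of those cited works: reduction to a finite-volume Wegner bound, spectral averaging against the bounded density $h_0$ (which is where $\lambda\neq 0$ and H4 enter, producing the factor $\|h_0\|_\infty/|\lambda|$), and a quantitative unique continuation principle to get a uniform lower bound $\kappa>0$ for $U_L=\sum_j u_j$ on the relevant spectral subspaces, which is where H2 and H3 are indispensable because the supports of the $u_j$ need not cover $\mathbb{R}^d$. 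One caveat on the step you yourself identify as the heart of the matter: in \cite{chk07} the positivity estimate $E(I_0)\,U_L\,E(I_0)\geq\kappa\,E(I_0)$ is established for the spectral projections of the \emph{periodic background} operator $H_{0,L}$ (via Floquet theory and unique continuation for $H_0$, which is what H2 provides), not directly and uniformly in $\omega$ for $\chi_{I_0}(H_{\omega,L})$ as you state it; transferring the bound from the free projections to the random ones is itself a substantial part of the argument there. Also, the logical order in your spectral-averaging paragraph is slightly inverted: unique continuation gives $\mathrm{tr}\,\chi_I(H_{\omega,L})\leq\kappa^{-1}\,\mathrm{tr}\bigl(\chi_I(H_{\omega,L})U_L\chi_I(H_{\omega,L})\bigr)$, and spectral averaging then bounds the expectation of the right-hand side by $C|I|\,|\Lambda_L|$. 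Neither point is a gap in the sense of a wrong idea; they are the places where the cited proof spends its effort.
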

Therefore the IDS is absolutely continuous and we can apply Theorem \ref {main} as mentioned above.
 In \cite {H08} it is shown that under some  other hypothesis there is band-gap localization.
 This means that the spectrum close to the gaps is pure point almost surely. 
 If we are not near the gaps however, then it is  not clear  which kind of spectra we have and
 the fact that the IDS is absolutely continuous guarantees that there is not almost sure spectra 
 of zero measure in this region.
 
 The results just mentioned are quite restrictive about the single site potential $u$, since it is required to
 be nonnegative and of compact support.  There are results about the absolute continuity of the IDS where these conditions are relaxed.
 In \cite {Ves08} Thm. 1. for example,  single site potentials of  {\it generalized step 
function form} which are allowed to change sign are considered  and in \cite{CH94}
 Corollary 4.6. results  about the absolute continuity of the IDS are provided, where it is not required  the single site potential to be of compact support. 
 In \cite{HLMW} the authors prove the absolute continuity of the integrated density of states for multi-dimensional
Schr\"{o}dinger operators with constant magnetic field and ergodic random potential.  Examples of potentials to which these results apply
are certain  alloy type and  Gaussian random potentials. For Gaussian  potentials see \cite{VesG} too.
 
 For  the almost periodic case there are  results on the absolute continuity of the IDS too. 
The almost Mathieu operator is defined  on $l_2(\mathbb Z)$ by 
 $$   (Hu)_n =u_{n+1}+u_{n-1} +2 \lambda cos(2\pi[\theta  +n  \alpha ])u_n$$
  In \cite{AD} it is proven that the integrated density of states of $H$ is absolutely continuous 
 if and only if $|\lambda |\not=1$. 
  If $|\lambda |<1$, then the spectral measures of $H$ are absolutely continuous for almost every 
$\theta$ according to \cite{AD}.  It is known  the spectral measures have no absolutely continuous component for $|\lambda| \geq 1$.
  From our results follow  in particular that there is not almost sure singular continuous or pure point  spectrum of measure zero
 for these $\lambda $.

\begin{acknowledgments}
I thank Luis Silva for useful discussions and suggestions which improved this paper and Javier Rosenblueth for his help with Latex.
\end{acknowledgments}

\end{document}